\newcommand{\R}{\mathbb{R}}
\newcommand{\Q}{\mathbb{P}}
\newcommand{\p}{\prime}
\newtheorem{theorem}{Theorem}[section]
\newtheorem{lemma}[theorem]{Lemma}
\title{Online Ranking with Constraints: A Primal-Dual Algorithm and Applications to Web Traffic-Shaping}
\author{Parikshit~Shah \qquad Akshay~Soni \qquad Troy~Chevalier} 
\begin{document} 
\maketitle

\begin{abstract}

We study the online constrained ranking problem motivated by an application to web-traffic shaping: an online stream of sessions arrive in which, within each session, we are asked to rank items. The challenge involves optimizing the ranking in each session so that local vs. global objectives are controlled: within each session one wishes to maximize a reward (local) while satisfying certain constraints over the entire set of sessions (global). A typical application of this setup is that of page optimization in a web portal. We wish to rank items so that not only is user engagement maximized in each session, but also other business constraints (such as the number of views/clicks delivered to various publishing partners) are satisfied. 

We describe an online algorithm for performing this optimization. A novel element of our approach is the use of linear programming duality and connections to the celebrated Hungarian algorithm. This framework enables us to determine a set of \emph{shadow prices} for each traffic-shaping constraint that can then be used directly in the final ranking function to assign near-optimal rankings. The (dual) linear program can be solved off-line periodically to determine the prices. At serving time these prices are used as weights to compute weighted rank-scores for the items, and the simplicity of the approach facilitates scalability to web applications. We provide rigorous theoretical guarantees for the performance of our online algorithm and validate our approach using numerical experiments on real web-traffic data from a prominent internet portal.
\end{abstract}





\section{Introduction}
This paper investigates the online constrained ranking problem --- a collection of sessions arrive in an online manner. In each session we wish to make optimal decisions that balance local versus global trade-offs: locally we wish to maximize the engagement of that session, while globally we wish to satisfy certain constraints over the entire set of sessions. Each session involves a user interacting with a collection of items, and the decision-making task at hand is that of selecting a ranking of these items. The rankings chosen across all sessions influence the rewards collected in the sessions as well as whether or not the overall constraints are satisfied. 
%
%

While our main focus in this paper is to develop algorithms for the general online constrained ranking problem, the key motivating application involves webpage layout optimization and traffic-shaping. In this context, each session corresponds to a user interacting with a webpage (for example a web portal), and the decision-maker wishes to optimize the placement of content from an assortment of content (e.g. news articles, sports articles, ads, blogs, financial news, etc.). 

The webpage is assumed to be divided into slots, and the problem of assigning content to slots may be viewed as an assignment or ranking problem. The natural locally optimal approach to optimizing a webpage involves computing an engagement score (e.g., a click probability or a predicted dwell-time) for each content, and displaying the content within slots on the page in descending order of the scores  so that the highest (predicted) engaging content is in the most highly clicked slot, and so on. 

However, in practice there are often other business constraints in place. For example we may wish to deliver a certain guaranteed number of clicks to different segments of the traffic. Business considerations may dictate that a certain number of clicks be delivered to a premium advertiser, or that a certain number of top-ranked page-views be delivered to certain premium news-agency partners over the entire collection of sessions in the traffic. These additional side constraints introduce trade-offs; performing a simple "locally optimal" ranking may violate these constraints. The challenge thus is to pick assignments in each session, online, in a way that satisfy the traffic-wide business constraints while maximizing user engagement. Since the traffic distribution is quite well-known in advance (and data is available for the same), the natural question to ask is whether we can optimize the pages in each session to maximize user engagement while satisfying the other traffic constraints. Our paper deals with developing a principled approach to deal with this traffic-shaping problem.

Note that our paper does not seek to address the learning-to-rank problem \cite{LTR,LTR_online}, i.e. the problem of learning how to predict user engagement from data. Indeed, in many practical systems, a natural separation of concerns is assumed between learning-to-rank (or machine learning for predicting other user-engagement signals such as click-through rate, dwell-time etc.) and the traffic-shaping problem. The machine learned models are used as input signals to a \emph{federation} layer that performs traffic-shaping, i.e. ingesting the signals and optimizing the ranking to maximize engagement and satisfying the constraints. However, the algorithm for traffic-shaping itself is completely agnostic to the details of how these signals are learned.

\textbf{Comparison to related work:} The traffic-shaping and click-shaping problems were studied by Agarwal et al. \cite{Agarwal_cs1,Agarwal_cs2,Agarwal_cs3} using an optimization based framework that involved probabilistic sampling. Several important features distinguish our work. The aforementioned work deals only with the single slot case (i.e. each session involves choosing a single item from a collection and picking the best one that satisfies a constraint), and they defer the multi-slot problem as an open problem. While they also adopt a linear programming approach, their model is substantially different; their optimization variables are probabilities of sampling items that maximize reward and satisfy traffic constraints. Moreover, they do not present theoretical guarantees on the solution quality. In contrast, our approach directly addresses the multi-slot situation (i.e. ranking items), our optimization involves optimizing over the set of permutations (i.e. assignments), and we tackle the online aspect of the problem by incorporating a learning phase followed by an online decision-making phase. We evaluate our method empirically and provide rigorous theoretical guarantees. 

Our work draws on ideas and tools from the online optimization literature, specifically the primal-dual method for online linear programming \cite{Ye} and its applications to online combinatorial optimization problems \cite{Buchbinder} such as the ad-words problem \cite{Mehta,Devanur}, whole-page optimization problem for ads \cite{Devanur2}, combinatorial auctions \cite{comb_auc}, and extensions to the nonlinear case \cite{Chen}. While our approach substantially uses ideas and tools from this literature, a key distinction is the nature of the decision-making problem. Each online session in the aforementioned lines of work involves a binary decision-making problem, i.e. whether or not to assign a unit of an item in the session. In contrast, in our problem, each session involves choosing a permutation (more specifically a perfect matching between the items and the slots). 

Moreover, unlike \cite{Ye} where the decision-maker must satisfy budget (upper bound) constraints, in our paper we have allocation (lower bound) constraints wherein a certain number of units (for e.g. clicks) must be delivered. Finally, we mention that within the literature, different lines of work arise from different distributional assumptions made on the sessions; e.g. \cite{Mehta} analyze the problem in the setting where the ordering of sessions may be adversarial with respect to the online decision maker. Another line of work \cite{Devanur, Ye} assumes the \emph{random permutation model}, wherein the distribution order of sessions may be assumed to exchangeable. Different theoretical guarantees are possible under different scenarios, and our work employs the random permutation model also.
Our main contributions in this paper are the following:
\begin{itemize}[leftmargin=*]
\item \textbf{Solution Form:} We present a new algorithm for the online constrained ranking problem. In the initial learning phase, dual prices $\lambda_t$ corresponding to each traffic constraint $t$ are learned. In the subsequent online phase, in each session, one is provided matrices $C$ and $A_{t}$ (see Sec. \ref{sec:formulation} for details) that capture the predictions for engagement and traffic-shaping units contributed for different possible rankings. The online ranking algorithm assumes the simple form:
\begin{equation} \label{eq:form}
\sigma = \texttt{MaxWeightMatching}\left(C + \sum_{t=1}^T \lambda_t A_t \right).
\end{equation}
When $\lambda_t=0$, one recovers the ``greedy'' solution, i.e. that of optimizing each session individually and disregarding the constraints. In order to satisfy the constraints, the session rankings must be re-optimized (over the space of permutations) as per \eqref{eq:form}.
As a consequence of the connection to maximum-weight matching, we intimately use the Hungarian algorithm \cite{Hungarian} and its analysis. 
\item \textbf{Scalability:} In the learning phase, data is collected and a single  linear program needs to be solved offline to obtain the prices. Thereafter, the online aspect is extremely scalable as it involves computing a matching using the prices. 
\item \textbf{Evaluation and Guarantees:} We present detailed empirical validation of our algorithm on real data. We also theoretically analyze our algorithm using the primal-dual method. Specifically we show that our algorithm achieves a competitive ratio of $1-O(\epsilon)$, where $\epsilon$ is the fraction of samples that are used in the learning phase.
\item \textbf{Flexibility:} Another desirable feature of our algorithm is that it is flexible; indeed constraints can change, performance of the machine learned input signals can vary, and we would like a solution that is robust. Robustness is achieved by simply repeating the learning phase (with the new constraints in place, for instance), and obtaining the new prices. Thereafter, the implementation of the online algorithm remains the same, only the new prices need be propagated.
\end{itemize}

\textbf{Notation:} Throughout the paper, we will use the notation $[m]:=\left\{1, \ldots, m \right\}$; this quantity will refer to the size of the list (of documents) to be ranked in each session. A permutation (or ranking), denoted by $\sigma$ refers to a bijection between sets $\sigma: [m] \rightarrow [m]$. Usually the sets of interest are the set of documents (in a fixed session), and the different slots, and a permutation $\sigma$ specifies the assignment of documents to slots. We will use the terms permutation, assignment, and ranking interchangeably throughout the paper. It will often be convenient to represent the ranking $\sigma$ by a permutation matrix $P \in \R^{m \times m}$ where $(P)_{ij}=1$ if $\sigma(i)=j$ and $0$ otherwise. Such a matrix has a single entry equal to one in each row and column, with the remaining entries equal to zero. We will refer to the set of $m!$ permutation matrices as $\mathcal{P}$. The convex hull of $\mathcal{P}$, denoted by $co(\mathcal{P})$ is the Birkhoff polytope \cite{Hungarian}, which will play an important role. 

Throughout the paper, the index $k$ is reserved for sessions, i.e. $k \in [n]$, the index $t$ is reserved for the $T$ different traffic-shaping constraints, i.e. $t \in [T]$. In a session $k$, if document $d$ is shown in position $p$, a user engagement of $\left(C_k \right)_{d,p}$ is assumed to be accrued. Given a permutation $\sigma$, the overall engagement reward is assumed to be additive, i.e. given by $\sum_{i=1}^m \left(C_k \right)_{i,\sigma(i)}$. The matrix $C_k$ is conveniently represented by a bipartite graph with $m$ nodes on the left and right and the edges having weights $\left(C_k \right)_{d,p}$. A matching $\sigma$ achieves a weight of $\sum_{i=1}^m \left(C_k \right)_{i,\sigma(i)}$. A maximum-weight perfect matching is a permutation $\sigma \in \mathcal{P}$ with maximum possible weight. We remind the reader that a maximum-weight perfect matching in a bipartite graph is obtained via the Hungarian algorithm \cite{Hungarian}. We will use the notation $P \geq 0$ to specify that the matrix $P$ is entry-wise non-negative, and $P^\p$ denotes the matrix transpose. We will use $\langle P,Q \rangle := \sum_{i=1}^{m} \sum_{j=1}^m P_{ij}Q_{ij}$ to denote the inner-product between the matrices $P, Q \in \R^{m \times m}$. When a ranking $P$ is chosen, the engagement corresponding to that ranking is given by $\langle C_k, P \rangle$ due to the additive nature of the reward.



\section{Formulation} \label{sec:formulation}
Let $k \in [n]$ index a collection of sessions where different users interact with the content-delivery system of interest. In each session, a collection of $m$ documents are made available. The task of the ranking system is, in each session $k$, to rank the documents, i.e. produce a permutation $\sigma_k: [m] \rightarrow [m]$.  

For each session, we assume access to predictions of how each document would perform with respect to user engagement (e.g. clicks or dwell time) if document $d$ was shown in the $p^{th}$ position. Let $C \in \R^{m \times m}$ be the matrix such that $C_{d,p}$ models the engagement when document $d$ is shown in the $p^{th}$ position. (Typically, this type of information is available from the output of a machine-learned model that predicts the engagement for each document, personalized for each user corresponding to the session in question). For each session, the engagement is given by $\sum_{p=1}^{m} C_{p,\sigma(p)} = \langle C, P \rangle$, where $P$ is the permutation matrix corresponding to $\sigma$. The total engagement across all sessions is therefore $\sum_{k=1}^{n} \langle C_k, P_k \rangle$, where $k$ is the user engagement for the $k^{th}$ session and $P_k$ is the corresponding permutation chosen.

Simultaneously, we assume that a number of traffic-shaping constraints are present. As an example, we may have constraints on the number of clicks that must be delivered to various publishing partners (in a fixed number of sessions). These can be captured as:
$$
\sum_{k=1}^{n} \langle A_{kt}, P_k \rangle \geq b_t,
$$
where the index $k$ refers to the session, and $t$ is refers to $t^{th}$ traffic-shaping constraint. The matrix  $A_{kt} \in \R^{m \times m}$ is the matrix whose $(d,p)$ entry captures the number of engagement units (e.g. clicks) delivered for the $t^{th}$ constraint in the $k^{th}$ session when document $d$ is shown in position $p$ (such estimates are also typically obtained as the output of a machine-learned click model)\footnote{As an aside, we note that in practice, $C_k$ and $A_{kt}$ are predictions obtained from a machine learned model, and the engagement (i.e., dwell-time and clicks) realized will likely be different from the predictions. We ignore this distinction in this paper, and make the simplifying assumption that the predictions are ``perfect''. When the models are consistent and a sufficiently large number of sessions are involved, the expected performance (being optimized here) will be close to that of the realized one.} 
. The constant $b_t$ is the number of clicks that are committed to the publishing partner in $n$ sessions contractually.
The objective of the ranking system is to maximize the overall user-engagement while satisfying certain constraints.

The above description suggests a natural optimization formulation:

\begin{equation}
\begin{split} \label{opt:P0}
\underset{P_1, \ldots, P_n}{\text{maximize}} & \qquad \sum_{k=1}^{n} \langle C_k, P_k \rangle \\
\text{subject to}  & \qquad \sum_{k=1}^{n} \langle A_{kt}, P_k \rangle \geq b_t \; \; t=1, \ldots, T\\
& \qquad P_k \in \mathcal{P} \; \; k=1, \ldots, n.
\end{split}
\end{equation}

While conceptually clean, working directly with this formulation has several disadvantages:
\begin{itemize}[leftmargin=*]
\item Solving this formulation requires knowledge of the $C_k, A_{kt}$ before-hand, whereas the problem is online in nature (i.e. the sessions arrive in an online manner).
\item The above optimization formulation is combinatorial and therefore intractable, i.e. it involves optimizing over the set of permutations. 
\end{itemize}

To address the last point, we perform a \emph{convex relaxation} \cite[Chapter 6]{Bubeck} of the problem, i.e. relax the optimization constraints $P_k \in \mathcal{P}$ to $P_k \in co(\mathcal{P})$, where the $co(\mathcal{P})$ represents the convex hull of $\mathcal{P}$ \cite[Chapter 18]{Hungarian}. The resulting relaxation converts the optimization formulation \eqref{opt:P0} into a convex optimization problem (indeed, it is a linear programming problem), which can therefore be solved (in principle) in polynomial time.
There are a number of difficulties pertaining to the relaxation:
\begin{itemize}[leftmargin=*]
\item Linear programming is computationally intensive and solving one at serving-time is often infeasible due to the strict latency requirements. Hence we desire to avoid solving them at run-time (although solving offline on a periodic basis is acceptable).
\item A problem associated with convex relaxations of combinatorial problems is that the resulting solution may be fractional (i.e. the solutions $P_k$ may not be permutation matrices). We then have to devise a scheme to convert the optimal solution to a permutation matrix via a rounding scheme. As we will show in our subsequent analysis, we will not need to round solutions. The special structure of our problem and well-known results from graph matching and the analysis of the Hungarian algorithm \cite[Chapter 17]{Hungarian} actually guarantee that we can always produce a valid extremal solution, rather than a fractional one.
\item In order to solve the convex relaxation, one needs a tractable representation of the set $co(\mathcal{P})$ in terms of a small number of equations and inequalities. It turns out that the convex hull of the set of permutation matrices has a compact description via the Birkhoff-von Neumann Theorem \cite[Chapter 18]{Hungarian}, it is simply the set of so-called doubly stochastic matrices (sometimes also called the Birkhoff polytope) given by:
\begin{equation}
\mathcal{B}:=co(\mathcal{P})=\left\{ P \in \R^{m \times m} \; | \; P \mathbf{1} = \mathbf{1} \; P^{\prime} \mathbf{1} = \mathbf{1} \; P \geq 0 \right\}.
\end{equation}
\end{itemize}

We will be able to resolve the above difficulties by computing and analyzing the dual; indeed the analysis will reveal a natural and simple online algorithm which we will describe below.

We first explicitly state the primal and dual linear programs that result from the convex relaxation of \eqref{opt:P0}.

\begin{equation}
\begin{split} \label{opt:P1}
\underset{P_1, \ldots, P_n}{\text{maximize}} & \qquad \sum_{k=1}^{n} \langle C_k, P_k \rangle \\
\text{subject to}  & \qquad \sum_{k=1}^{n} \langle A_{kt}, P_k \rangle \geq b_t \; \; \forall \; t=1, \ldots, T\\
& \qquad P_k \in \mathcal{B} \; \; k=1, \ldots, n
\end{split}
\end{equation}

\begin{equation}
\begin{split} \label{opt:D1}
\underset{\lambda, \alpha_k,\beta_k}{\text{maximize}} & \qquad \sum_{t=1}^T \lambda_t b_t + \sum_{k=1}^n \alpha_k^{\prime} \mathbf{1} + \sum_{k=1}^n \beta_k^{\prime} \mathbf{1} \\
\text{subject to}  & \qquad C_k + \sum_{t=1}^{T} \lambda_t A_{kt} + \mathbf{1} \alpha_k^{\prime}+  \beta_k \mathbf{1}^{\prime} \leq 0 \; \; \forall k=1, \ldots, n \\
& \qquad \lambda \geq 0
\end{split}
\end{equation}

The dual linear program is of key interest in the paper --- our algorithm will consist of solving \eqref{opt:D1} on historical data, and computing the optimal dual variables $\lambda$ which have a natural price interpretation. We describe this in the next section.

\section{The Algorithm} \label{sec:algorithm}
Recall that in our setup, the $n$ sessions arrive in a sequential manner online, and for each session $k$ we have access to the matrices $C_k$ (the engagement), and $A_{tk}$ (corresponding to the traffic-shaping constraint). We denote the full set of sessions by $N$.
Our approach involves using a subset of the sessions, $S$, initially to learn the dual prices. The corresponding primal/dual linear programs are called the \emph{sample linear programs}. In these learning-phase sessions, we make arbitrary decisions --- concretely we let $P_k = I$, the identity permutation.  Let $\hat{n}:=|S|$ be chosen and denote $\epsilon = \frac{\hat{n}}{n}$. Since the constraints require that a budget of $b_t$ clicks are to be delivered over $n$ sessions, it follows that when $\epsilon n$ sessions are present, the corresponding commitment is only $\epsilon b_t$ clicks for each traffic-constraint $b_t$. In this way, the $b_t$ is rescaled. In order to be conservative, we instead require that for each constraint:
$
\sum_{k=1}^{\hat{n}} \langle A_{kt}, P_k \rangle \geq \nu \epsilon b_t.
$
The additional $\nu$ multiplicative factor ensures that the solution obtained is robust to random variations (we will discuss this in Section \ref{sec:theory}), and also to counter the conservative estimate that since the first $\epsilon$ fraction of sessions are used for learning, zero (or negligible) contributions to the constraints are made from it. Accordingly, the primal/dual sample linear programs are:
\begin{equation}
\begin{split} \label{opt:P2}
\underset{P_1, \ldots, P_{\hat{n}}}{\text{maximize}} & \qquad \sum_{k=1}^{\hat{n}} \langle C_k, P_k \rangle \\
\text{subject to}  & \qquad \sum_{k=1}^{\hat{n}} \langle A_{kt}, P_k \rangle \geq \nu \epsilon b_t \; \; \forall \; t=1, \ldots, T\\
& \qquad P_k \in \mathcal{B} \; \; k=1, \ldots, \hat{n}
\end{split}
\end{equation}
\begin{equation}
\begin{split} \label{opt:D2}
\underset{\lambda, \alpha_k, \beta_k}{\text{maximize}} & \qquad \sum_{t=1}^T \nu\epsilon \lambda_t b_t + \sum_{k=1}^{\hat{n}} \alpha_k^{\prime} \mathbf{1} + \sum_{k=1}^{\hat{n}} \beta_k^{\prime} \mathbf{1} \\
\text{subject to}  & \qquad C_k + \sum_{t=1}^{T} \lambda_t A_{kt} + \mathbf{1} \alpha_k^{\prime}+  \beta_k \mathbf{1}^{\prime} \leq 0 \; \; \forall k=1, \ldots, \hat{n} \\
& \qquad \lambda \geq 0
\end{split}
\end{equation}
In the theoretical anaylsis section we prove that when $\nu = 1+ 4 \epsilon$, with high probability a feasible solution is returned. We also prove that when $\nu = 1 - \epsilon$, an almost feasible solution is returned, and that the objective is almost as large as the hindsight optimal feasible solution.

\begin{algorithm}[!ht]
   \caption{Algorithm for optimal traffic-shaping}
   \label{alg:traffic_shaping}
\begin{algorithmic}[1]
   \STATE {\bfseries Input:} Data $\left\{ (C_k, A_{k1}, \ldots, A_{kT})\right\}$ for sessions $k=1, \ldots, n$ presented online for ranking, sample linear program parameters $\nu$, $\epsilon$.
   \STATE For the first $\hat{n}$ sessions, let $P_k = I$ (arbitrary decisions) and log the session data $\left\{ (C_k, A_{k1}, \ldots, A_{kT})\right\}$ corresponding to this sample set $k \in S$.
   \STATE Solve the sample linear program \eqref{opt:D2} on $S$ to obtain the dual prices $\hat{\lambda}_1, \ldots, \hat{\lambda}_m$.
   \WHILE{$\hat{n} < k \leq n$}
   \STATE Compute document score matrix $S_k = C_k + \sum_{t=1}^T \hat{\lambda}_t A_{kt}$ where $S_{d,p}$ represents the score when document $d$ is shown in position $p$. \label{step:scoring}
\STATE Set $\sigma_k:=\texttt{MaxWeightMatching}(S_k)$. \label{step:hungarian}
   \STATE {\bfseries Output:} Ranking $\sigma_k$ for session $k$. 
   \STATE Increment session counter $k:=k+1$
   \ENDWHILE
\end{algorithmic}
\end{algorithm}

In words, the algorithm works as follows. First, using historical data, we offline solve the linear program \eqref{opt:D2} to obtain the optimal dual prices $\hat{\lambda}_t$ for each traffic shaping constraint. These prices are then made available to the online serving system for ranking. Once the learning phase is over, upon the arrival of new sessions, the matrices $C_k, A_{kt}$ are computed, and the score matrix $S_k$ is computed as per step \ref{step:scoring}. As mentioned before, the $(d,p)$ entry of the matrix $S_k$ represents the score when document $d$ is shown in position $p$.

Once the score matrix is computed, we call a subroutine to compute the maximum-weight bipartite perfect matching with respect the weight matrix $S_k$, this perfect matching assigns documents to positions, and thereby yields the desired ranking $\sigma_k$ for that session. 


The run-time complexity is essentially the complexity of computing a perfect matching, for which a number of options are available. In order to compute a max-weight perfect matching exactly, one may use the celebrated Hungarian algorithm. The computational complexity of implementing the same is $O(m^3 \log m)$ \cite{Hungarian}. In scenarios of interest to us, we are typically only interested in ranking of the order of $20-200$ webpages in each session, and hence the algorithm is feasible to implement at run-time.  When the scale is larger and the Hungarian algorithm is not a viable option, one can instead implement the naive greedy algorithm\footnote{The greedy algorithm is simply the following: sort the edges of the bipartite graph in decreasing order. We maintain a set of edges $M$ (initialized to be empty), and loop over the sorted edges. In each step, add the corresponding edge to $M$ if it is possible to do so while maintaining the requirement that $M$ is a (possibly partial) matching.} as an alternative. The naive greedy algorithm is known to be a $\frac{1}{2}$-optimal algorithm for maximum matching, with a worst-case running time of $O(m^2)$. In our experiments, we use a position effects model called the ``reference CTR'' model, in which the greedy algorithm is actually optimal. The offline computational complexity is polynomial in the size of the problem --- it involves the solution of the linear program \eqref{opt:D2}, which scales with the sample size of the sampled linear program and is hence high. However, since offline computation is not a bottleneck, our approach is scalable to web applications.

While Algorithm \ref{alg:traffic_shaping} advocates solving the sampled linear program once (in advance) on historical data, this is mostly to simplify the analysis of the algorithm. In practice, we recommend solving the dual linear program periodically (e.g. once daily) to refresh the prices to capture the random as well as seasonal variations in traffic. The analysis of periodically recomputing the solution is also possible (in the spirit of \cite{Ye}), but the analysis is more complicated with only marginally better bounds --- we leave this for future work.

\section{Numerical Experiments} \label{sec:expts}
\subsection{Experimental Setup}
In this section we validate our algorithm via numerical experiments. Our numerical experiments are conducted on real data collected from session history on a major web-portal. We describe the setup for our experiment below.

Our data consists of $2000$ sessions of web-traffic data. In each session, we are presented with $20$ documents to rank on the web-page. Each document has associated with it the following attributes:
\begin{enumerate}[leftmargin=*]
\item The predicted dwell-time: This is a score generated by a machine-learned model on a large corpus of data that incorporates document features, user features, and document popularity signals. The predicted score is to be interpreted as a surrogate for the expected dwell-time when the user is shown that document. 
\item The click-through rate: This is again generated via a machine learned model, and is an estimate for the click-probability for a particular user to click on the document in question when that document is shown in the first position.
\item Newsiness score: This is a normalized score that captures how newsy a particular document is. It is also obtained as the output of a machine-learned model. When a newsy article is shown in a highly clickable slot on the page,  the overall page is deemed more newsy.
\item Publisher A: This is a binary $\left\{ 0,1 \right\}$ score that captures whether a document is curated by Publisher A or not. Showing a Publisher A-curated document in a more clickable slot delivers more clicks to that publisher, and is hence more desirable to it.
\item Publisher B: This is also a binary score (as above) corresponding to whether document is curated by Publisher~B.
\end{enumerate}

In our experiments, we implement Algorithm \ref{alg:traffic_shaping}, i.e. use a subset of the data for learning the dual prices (making arbitrary decisions in the process), and in the remaining sessions making the optimal decisions with respect to the learned dual prices. In our experiments, the online optimization problem involves:
(a) maximizing the dwell-time in each session,
(b) delivering a fixed number of clicks to Publishers A and B,
(c) achieving at least a certain average newsiness score over the entire set of sessions.

One of the key requirements of our framework is that we need the dwell-time, click probability, and newsiness score for each document when it is shown in any one of the $20$ slots. The machine learned models generate these scores with respect to the first slot only. In order to address this, we utilize the (somewhat standard) Reference CTR model, which is explained below.
\subsection{The Reference CTR}
When users are presented with a list of content on a web-page, it is well-known \cite{Chapelle} that there is a strong \emph{position effect} for clicks, dwell-time, and other engagement metrics (such as the newsiness score of interest to us in this paper). It is commonly assumed \cite{Chapelle}, that a document $d$, when shown in slot $p$, has a CTR that depends on its CTR in position $1$ (denoted by $c_1(d)$) and a factor that depends only on the position (denoted by $\text{ref}_p$). It is normalized so that $\text{ref}_1=1$. Hence the predicted CTR of document $d$ in position $p$ is given by:
%
%
$$
c_p(d)=c_1(d)\times\text{ref}_p.
$$

The quantity $\text{ref}_p$ is computed by computing the aggregate CTR for each position, and normalizing by the aggregate CTR for position $1$ \cite{Chapelle}. We will also assume (for this experiment) that the dwell-time and newsiness have a similar position effect as above.

For a list of items presented to a user according to a permutation $\sigma$, the total number of (predicted) clicks generated by the session, and the clicks generated for publisher $t$ (where $t \in \left\{A,B\right\})$ are respectively given by:
\begin{align*}
C(\sigma)&=\sum_{p=1}^{m}c_p(\sigma(p))\times\text{ref}_p \\
C_t(\sigma)&=\sum_{p=1}^{m}c_p(\sigma(p))\times\text{ref}_p \times \mathbf{1}\left(\sigma(p) \text{ of traffic type } t \right).
\end{align*}

Similarly, we assume that the total dwell-time and newsiness respectively are:
$$
D(\sigma)=\sum_{p=1}^{m}\texttt{dwell}(\sigma(p))\times\text{ref}_p,
\qquad 
N(\sigma)=\sum_{p=1}^{m}\texttt{news}(\sigma(p))\times\text{ref}_p.
$$

Figure \ref{fig:refctr} shows the reference CTR distribution. 

\begin{figure}[!htb]
\centering
\includegraphics[scale=.3]{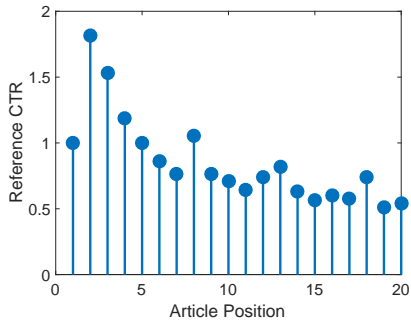}
\caption{The reference CTR distribution for the slots on the page. Interestingly, the reference CTR distribution is not monotonically decreasing with the slot position due to various page presentation effects.}
\label{fig:refctr}
\end{figure}

\subsection{Observations}
We first investigate the performance of sampled linear program when the size of the sample set increases to $2000$ sessions (entire dataset). Figure \ref{fig:costVSsessions} shows how the average (normalized) dwell-time per session varies as we increase the size of the sampled linear program. Note that when all $2000$ sessions are used, we essentially obtain the hindsight-optimal solution. As Fig. \ref{fig:costVSsessions} shows, this level of performance is approximately achieved by the sampled linear program withing just $800$ sessions, justifying our intuition that this quantity is distributionally stable with respect to the traffic.

\begin{figure*}
\begin{tabular}{cc}
\begin{subfigure}{0.45\textwidth}\centering\includegraphics[scale=.32]{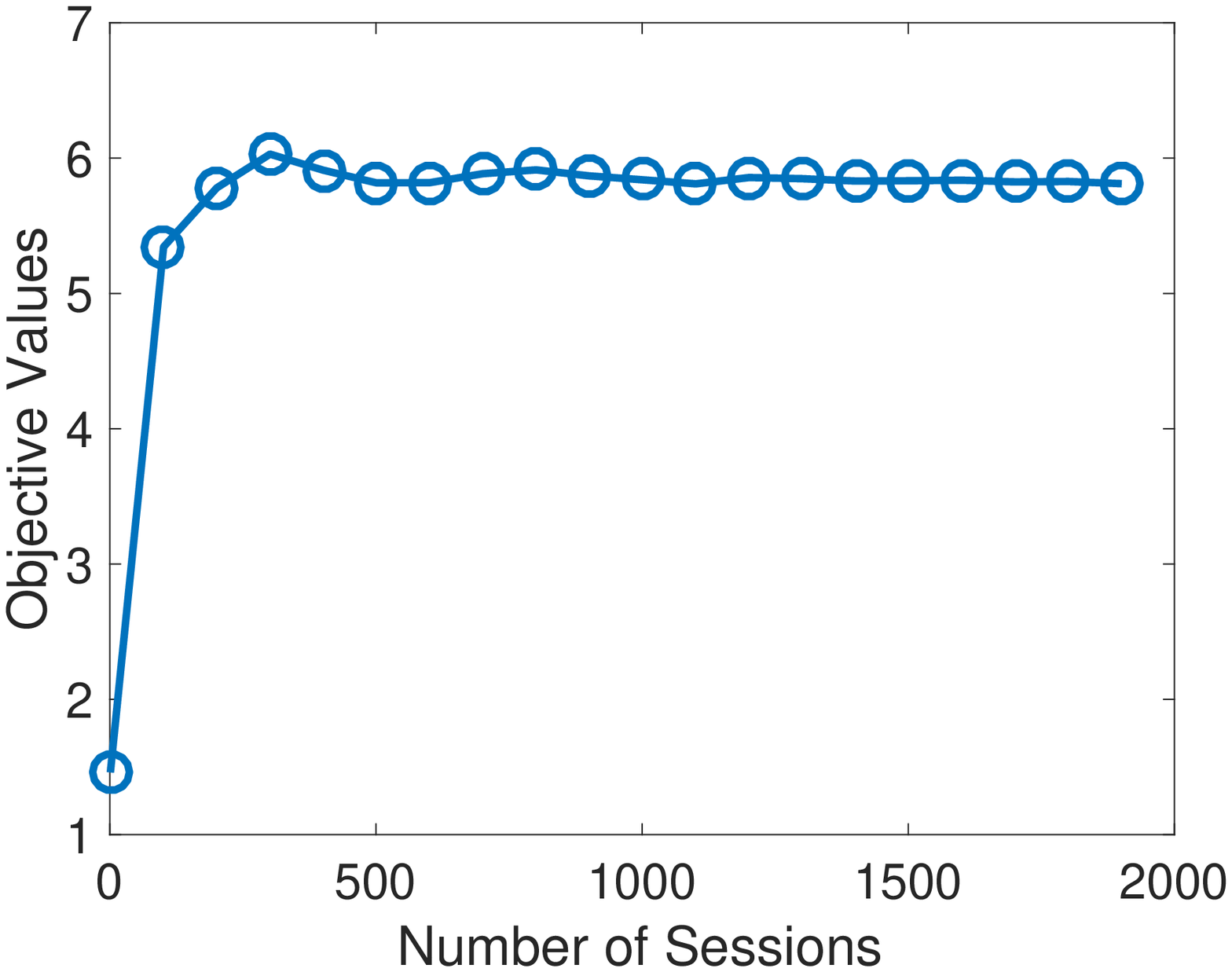}\caption{}\label{fig:costVSsessions}{}\end{subfigure} &
\begin{subfigure}{0.45\textwidth}\centering\includegraphics[scale=.32]{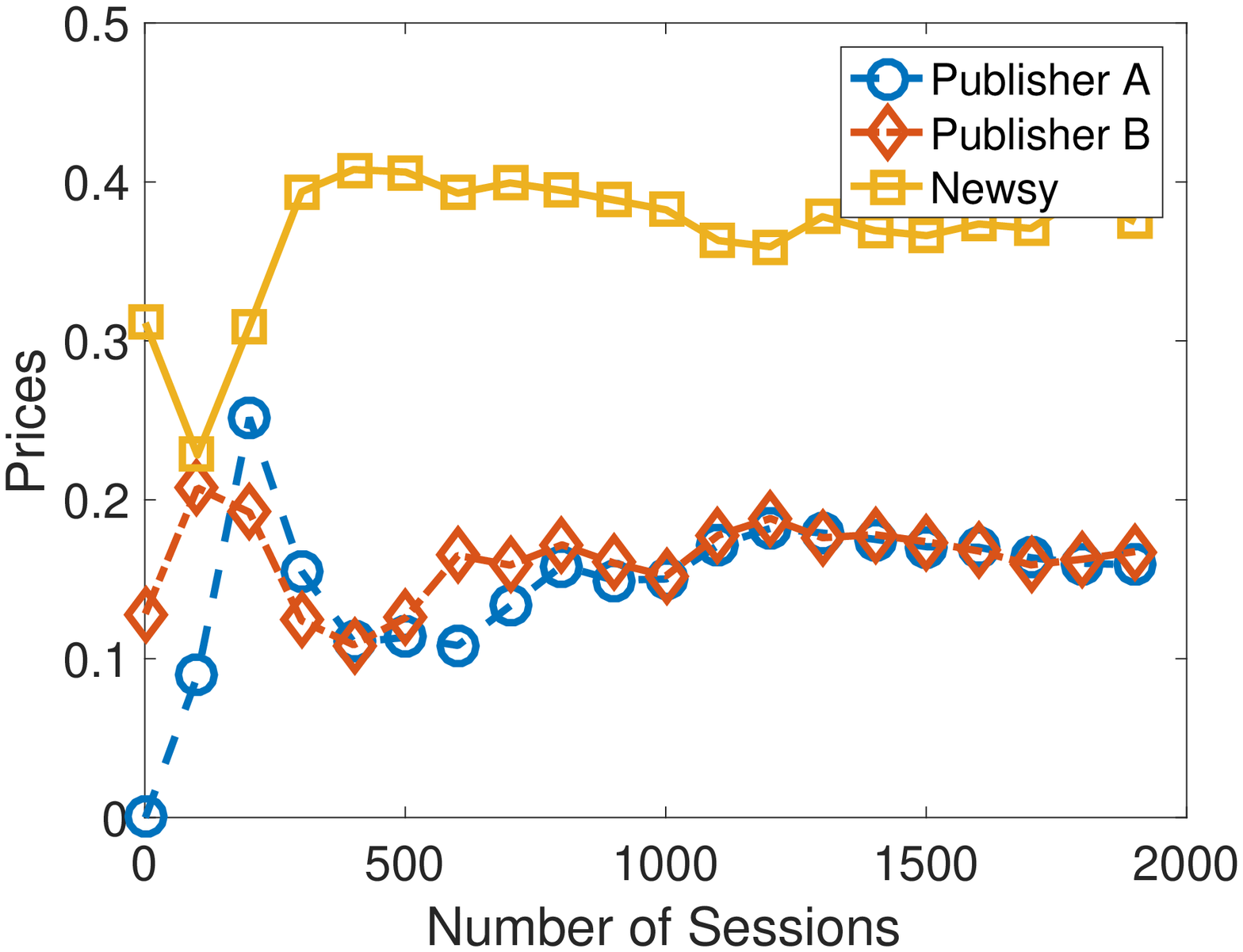}\caption{}\label{fig:pricesVSsessions}\end{subfigure} 
\end{tabular}
\caption{(a) Average Dwell-time per session as a function of the sample size for the sampled linear program. The dwell-time per session stabilizes at approximately $800$ sessions and is constant thereafter. (b) Dual prices for each of the constraints, as a function of the sample size for the sampled linear program. The dual prices also stabilize at approximately $800$ sessions and are constant thereafter. }
\label{fig:main}
\end{figure*}

\begin{figure*}
\begin{tabular}{cc}
\begin{subfigure}{0.45\textwidth}\centering\includegraphics[scale=.32]{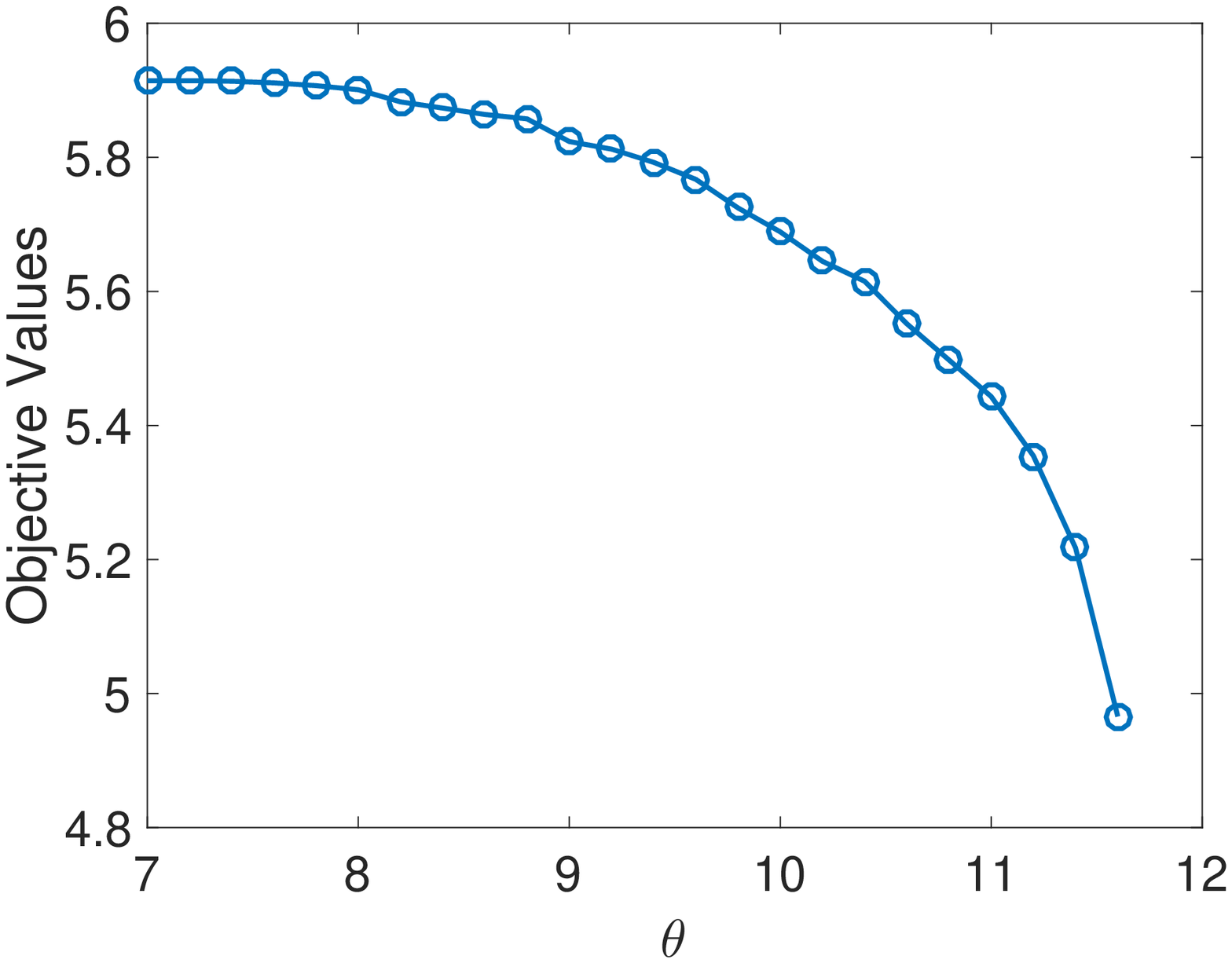}\caption{}\label{fig:costVSconstraint}\end{subfigure} &
\begin{subfigure}{0.45\textwidth}\centering\includegraphics[scale=.32]{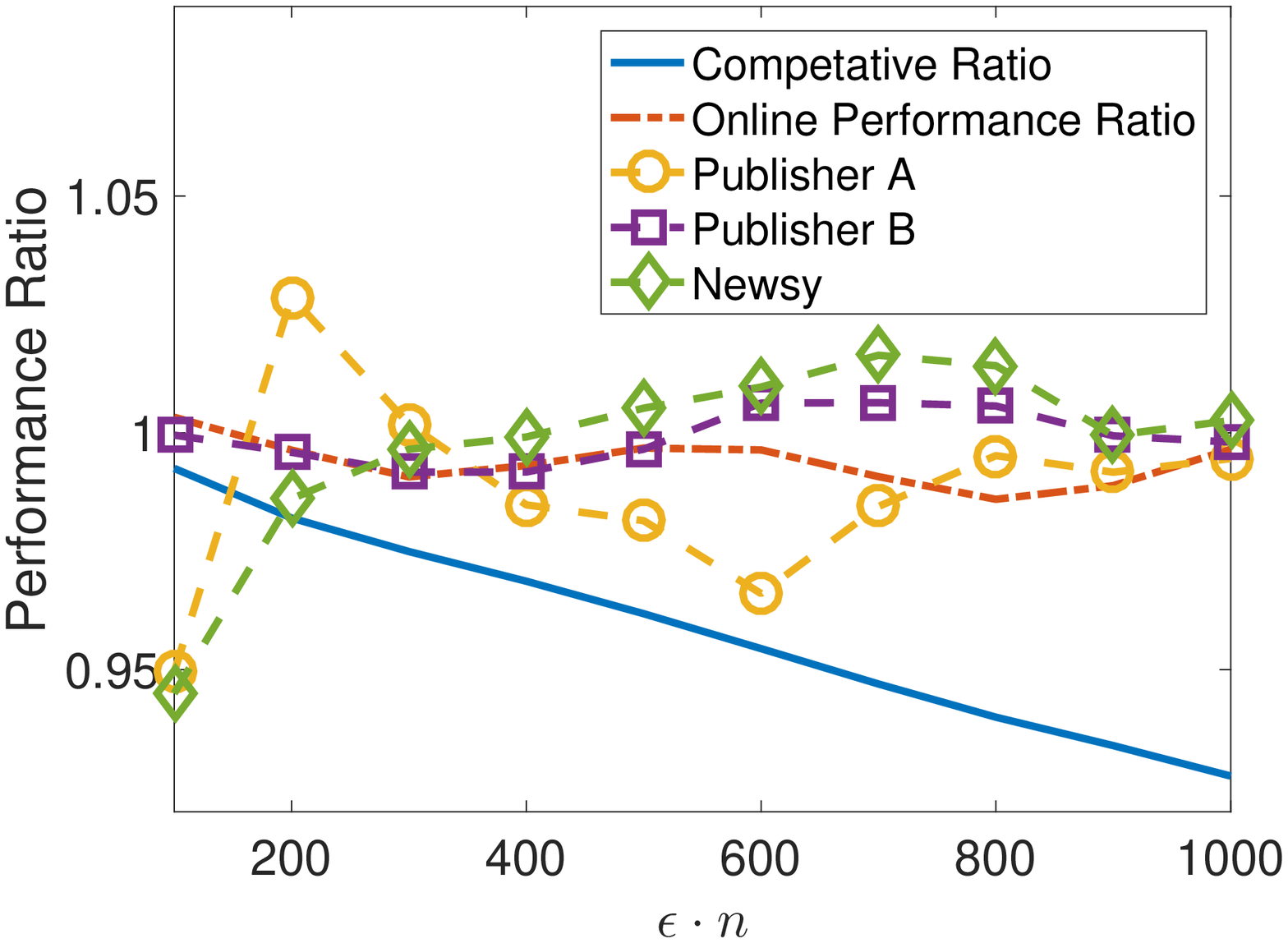}\caption{}\label{fig:performanceVSepsilon}\end{subfigure}  
\end{tabular}
\caption{ (a) Trade-off between dwell-time per session and clicks delivered to Publisher A. (b) Different performance ratios as a function of $\epsilon$.}
\label{fig:main}
\end{figure*}


Figure \ref{fig:pricesVSsessions} shows how the dual prices change as a function of the number of sessions over which the sampled linear program is solved. Again, we see that the prices stabilize at approximately $800$ sessions.

In Fig. \ref{fig:costVSconstraint} we investigate the trade-offs that are inevitable as a consequence of traffic-shaping. We fix the number of sessions to be $2000$ and solve the corresponding linear program (i.e. the hindsight optimal solution). We fix the values of the constraints for clicks delivered to publisher $B$ and the newsiness constraint. We vary the number of clicks deliverable to publisher $A$ by changing a parameter $\theta$ (as $\theta$ increases, the corresponding click constraint commitment $b_t$ increases multiplicatively). We study how the dwell-time per session changes as $\theta$ increases --- the corresponding trade-off curve is shown in Fig. \ref{fig:costVSconstraint}.

In Fig. \ref{fig:performanceVSepsilon} we study the performance of the online algorithm. On the $x$-axis is the fraction of the $2000$ sessions which were used for the sampled linear program. On the $y$-axis is the performance ratio for different metrics. A value of $\nu=1.05$ (as required in \eqref{opt:D2}) was chosen for this experiment. The solid-blue curve is the competitive ratio (i.e. the ratio of the performance by the online algorithm to that of the hindsight optimal solution, see Sec. \ref{sec:theory}). Note that in the first $\epsilon$ fraction of the sessions (the price-learning phase), arbitrary suboptimal decisions are made, and hence the competitive ratio decreases as a function of $\epsilon$ (see Theorem \ref{thm:main}). 
The yellow, purple, and green plots are the ratio of the clicks/newsiness delivered versus the value committed by the constraints (i.e. the $b_t$), and values under one indicate constraint violation. Note that for small values of $\epsilon$, some of the constraints are actually violated --- hence while the competitive ratio is high, training on that small subset of sessions yields infeasible solutions.
The red dotted line indicates the online performance ratio as a function of $\epsilon$. The online performance ratio is the ratio of the reward of the online algorithm to that of the hindsight optimal sessions, \emph{restricted to the segment of the sessions where online decisions are made}. While the online performance ratio is close to $1$ throughout, it attains this ratio by violating constraints at small values of $\epsilon$.

Another interesting feature that we found was that, under the RefCTR model, the greedy algorithm (i.e. sorting documents by the price-weighted scores, and then matching them to positions in sorted order of their RefCTR value) achieve the \emph{same ranking} compared to the Hungarian algorithm. This is simply a structural consequence of the RefCTR model (we omit the proof here), however the scalability implications are substantial; instead of implementing the Hungarian algorithm, the serving implementation only needs a sorting procedure.


\section{Theoretical Guarantees} \label{sec:theory}
In order to describe the performance guarantees, we need to define a critical (and well-known) solution concept related to online algorithms, namely the notion of the competitive ratio of the algorithm.

Given online events $\left\{ (C_k, A_{k1}, \ldots, A_{km})\right\}$ for sessions $k \in [n]$, let OPT denote the solution obtained by solving the linear program \eqref{opt:P1} on this data --- we call the solution thus obtained the \emph{optimal hindsight solution}. A hypothetical decision-maker with advance knowledge of the ``future'' sessions would make these optimal decisions, and this is the best performance achievable by any algorithm. Suppose Algorithm \ref{alg:traffic_shaping} (which is online) achieves a performance of $\widehat{\text{OPT}}$. We define the \emph{competitive ratio} of the algorithm to be $\frac{\widehat{\text{OPT}}}{\text{OPT}}$. The competitive ratio measures how the performance of the online algorithm compares to the optimal hindsight solution. Our main result shows that, under certain assumptions, our algorithm achieves an $1-O\left( \epsilon \right)$ competitive ratio. We state our assumptions next:

\begin{enumerate} [leftmargin=*]
\item \textbf{Assumption 1:} The sessions $\left\{ (C_k, A_{k1}, \ldots, A_{kT})\right\}$  arrive in a random order, i.e. the matrices  $\left\{ (C_k, A_{k1}, \ldots, A_{kT})\right\}$ can be arbitrary, but every permutation of the indices $k$ have an equal probability.
\item \textbf{Assumption 2:} The optimization problems \eqref{opt:P1} and \eqref{opt:P2} are both strictly feasible.
\item \textbf{Assumption 3:} The total number of sessions $n$ is known a priori.
\item \textbf{Assumption 4:} For every permutation $P$ we have $\langle C_k, P \rangle \in [0,1]$ and  $\langle A_{kt}, P \rangle \in [0,1]$ for all $k=1, \ldots, n$ and $t=1, \ldots, T$.
\end{enumerate}

\begin{theorem} \label{thm:main}
Consider the online traffic-shaping problem when the above assumptions hold true, and let $\epsilon \in (0,\frac{1}{3})$ be fixed. Define $B:=\min_{t=1, \ldots, T} b_t$. Suppose the constraints satisfy the requirement that
\begin{equation} \label{eq:sample_complexity}
\epsilon \geq \left( \frac{\log \left( \frac{2C_0T}{\epsilon} \right) + (T+1) \log \left( m^2n \right)}{B} \right)^{\frac{1}{3}}.
\end{equation}

Let the dual prices computed by the sampled linear program be denoted by $\hat{\lambda}$.
Then the solution constructed by Algorithm \ref{alg:traffic_shaping}, denoted by $P_k(\hat{\lambda})$, has the following properties:
\begin{enumerate}
\item In each session the optimal hindsight solution and the online approach via Algorithm \ref{alg:traffic_shaping} produce  valid assignments $P_k^*$ and $P_k^{\text{online}}(\hat{\lambda})$. Moreover, if the optimal hindsight prices, denoted $\lambda^*$, are given \ref{alg:traffic_shaping}, then   $P_k^* = P_k^{\text{online}}(\lambda^*)$.
\item When $\nu = 1 + 4 \epsilon$, With probability exceeding $1 - \epsilon$, the assignments satisfy the traffic-shaping constraints, i.e. 
$$
\sum_{k=1}^{n} \langle A_{kt}, P_k(\hat{\lambda}) \rangle \geq b_t \; \; \forall t=1, \ldots, T
$$
\item When $\nu = 1 - \epsilon$, with probability exceeding $1-\epsilon$ the online solution satisfies
$$
\sum_{k=1}^{n} \langle A_{kt}, P_k(\hat{\lambda}) \rangle \geq (1-2\epsilon)b_t \; \; \forall t=1, \ldots, T
$$
and objective attained by the online assignments $P_k(\hat{\lambda})$ satisfies:
$$
\sum_{k=1}^{n} \langle C_k, P_k(\hat{\lambda}) \rangle \geq (1-\epsilon) \text{OPT},
$$
(i.e. the competitive ratio is $1-O(\epsilon)$ with high probability).
\end{enumerate}
\end{theorem}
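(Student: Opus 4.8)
The plan is to split the argument into a deterministic per-session structure result (Part 1) and a sampling analysis that drives Parts 2 and 3. For Part 1, I would form the Lagrangian of \eqref{opt:P1} with multipliers $\lambda \geq 0$ on the covering constraints, namely $\sum_k \langle C_k + \sum_t \lambda_t A_{kt}, P_k\rangle - \sum_t \lambda_t b_t$. For fixed $\lambda$ this decouples across sessions, so the Lagrangian-optimal response in session $k$ is $\arg\max_{P \in \mathcal{B}} \langle S_k(\lambda), P\rangle$ with $S_k(\lambda) = C_k + \sum_t \lambda_t A_{kt}$. By the Birkhoff--von Neumann theorem the maximum over $\mathcal{B}$ is attained at a vertex, i.e. a genuine permutation matrix, which is exactly what \texttt{MaxWeightMatching} returns; hence $P_k^{\text{online}}(\lambda)$ is always a valid assignment and no rounding is ever needed. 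Strict feasibility (Assumption 2) gives strong duality, so an optimal dual $\lambda^*$ exists, and the saddle-point / complementary-slackness conditions force each hindsight-optimal block $P_k^*$ to maximize $\langle S_k(\lambda^*), P_k\rangle$; selecting the matching returned by the Hungarian algorithm then gives $P_k^* = P_k^{\text{online}}(\lambda^*)$, which is the assertion.

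For Parts 2 and 3 I would write $a_{kt}(\lambda) := \langle A_{kt}, P_k(\lambda)\rangle \in [0,1]$ (Assumption 4) and let $G_t(\lambda) := \sum_{k=1}^n a_{kt}(\lambda)$ be the full-horizon consumption of constraint $t$ under prices $\lambda$. Because the sessions arrive in uniformly random order (Assumption 1), the learning set $S$ of size $\hat n = \epsilon n$ is a uniformly random subset, so $\sum_{k \in S} a_{kt}(\lambda)$ has mean $\epsilon\,G_t(\lambda)$, and a Chernoff/Hoeffding bound for sampling without replacement controls its deviation. Since \eqref{opt:P2} is feasible at $\hat\lambda$ we have $\sum_{k\in S} a_{kt}(\hat\lambda) \geq \nu\epsilon b_t$; transferring this to the full horizon through the concentration bound yields $G_t(\hat\lambda)$ within a multiplicative $(1\pm O(\epsilon))$ factor of $\nu b_t$. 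Plugging $\nu = 1 + 4\epsilon$ gives $G_t(\hat\lambda) \geq b_t$ (Part 2), while $\nu = 1-\epsilon$ gives $G_t(\hat\lambda) \geq (1-2\epsilon)b_t$ (the feasibility half of Part 3).

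The one genuine obstacle is that $\hat\lambda$ is itself a function of the random set $S$, so the concentration inequality cannot be invoked at $\lambda = \hat\lambda$ directly; this is the step I expect to be the crux. I would resolve it as in the one-time-learning analysis of online LP: bound the number of distinct values $\hat\lambda$ can take and union-bound over them. Observe that $\hat\lambda$ is a vertex of the dual feasible polyhedron of \eqref{opt:D2}, pinned down by which of the scalar inequalities $\big(S_k(\lambda) + \mathbf{1}\alpha_k' + \beta_k\mathbf{1}'\big)_{dp} \leq 0$ are tight; since these number $m^2 n$ (an $m\times m$ entrywise constraint per session), there are at most $\binom{m^2 n}{T+1} \leq (m^2 n)^{T+1}$ candidate prices, independent of the ordering. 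Making this count rigorous (controlling how the piecewise-constant map $\lambda\mapsto(P_k(\lambda))_k$ changes across matching-tie hyperplanes) is the delicate part. Union-bounding the multiplicative concentration estimate over these candidates and over the $T$ constraints, with per-event failure probability $\exp(-\Omega(\epsilon^2\cdot\epsilon b_t)) = \exp(-\Omega(\epsilon^3 B))$, keeps the total failure below $\epsilon$ precisely when $\epsilon^3 B \gtrsim (T+1)\log(m^2 n) + \log(C_0 T/\epsilon)$ --- which is hypothesis \eqref{eq:sample_complexity}, and explains both the cube root and the $(T+1)\log(m^2 n)$ term.

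Finally, for the objective bound I would invoke weak duality in the form $\text{OPT} \leq \sum_k v_k(\hat\lambda) - \sum_t \hat\lambda_t b_t$, where $v_k(\hat\lambda) = \langle S_k(\hat\lambda), P_k(\hat\lambda)\rangle$, together with the identity $\sum_k\langle C_k, P_k(\hat\lambda)\rangle = \sum_k v_k(\hat\lambda) - \sum_t \hat\lambda_t G_t(\hat\lambda)$, to get $\sum_k\langle C_k, P_k(\hat\lambda)\rangle \geq \text{OPT} - \sum_t \hat\lambda_t\big(G_t(\hat\lambda) - b_t\big)$. Near-complementary-slackness (prices are nonzero only on sample-binding constraints, whose full consumption is $\approx \nu b_t$) combined with the concentration estimates shows the correction term is at most $\epsilon\,\text{OPT}$, where the auxiliary bound $\sum_t \hat\lambda_t b_t = O(\text{OPT})$ --- needed to convert absolute into relative error --- follows from strong duality and boundedness of the sample-LP dual optimum. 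Accounting separately for the at most $\epsilon\,\text{OPT}$ reward forgone on the arbitrary learning-phase sessions then delivers the competitive ratio $1 - O(\epsilon)$.
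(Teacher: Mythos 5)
Your proposal follows the same three-part strategy as the paper's proof: Part 1 via per-session decoupling for fixed $\lambda$, Birkhoff--von Neumann extremality of the matching solution, and complementary slackness at $\lambda^*$ (this is exactly the content of the paper's Lemmas \ref{lemma:hung} and \ref{lemma:wt_hung}); Parts 2 and 3 via sampling-without-replacement concentration plus a union bound over the data-dependent prices $\hat\lambda$, with the cube root in \eqref{eq:sample_complexity} arising exactly as you say, from per-event failure probability $\exp(-\Omega(\epsilon^3 B))$ against roughly $C_0 T (m^2 n)^{T+1}$ union-bound terms. The one packaging difference is the objective bound: you use the direct weak-duality inequality $\text{OPT} \le \sum_k v_k(\hat\lambda) - \sum_t \hat\lambda_t b_t$ together with the identity $\sum_k \langle C_k, P_k(\hat\lambda)\rangle = \sum_k v_k(\hat\lambda) - \sum_t \hat\lambda_t G_t(\hat\lambda)$, whereas the paper constructs an auxiliary LP \eqref{opt:P_aux} with right-hand sides $\hat b_t$ for which $(\hat\lambda, P_k(\hat\lambda))$ is optimal by complementary slackness and for which $P_k^*$ is feasible with high probability. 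These are equivalent; your Lagrangian version is arguably cleaner, and note that with $\nu = 1-\epsilon$ the correction term $\sum_t \hat\lambda_t\bigl(G_t(\hat\lambda) - b_t\bigr)$ is in fact nonpositive w.h.p.\ (prices are supported on sample-tight constraints, whose full-horizon consumption concentrates below $b_t$), so one gets $\sum_{k \in N} \langle C_k, P_k(\hat\lambda)\rangle \ge \text{OPT}$ outright, with the $(1-\epsilon)$ factor coming only from discarding the learning-phase sessions --- exactly as in the paper.

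The one step where your justification, as written, would fail is the count of candidate prices. You argue that $\hat\lambda$ is a vertex of the dual polyhedron of \eqref{opt:D2} ``pinned down by which of the $m^2 n$ scalar inequalities are tight,'' giving at most $\binom{m^2 n}{T+1}$ candidates. But that polyhedron lives in $\mathbb{R}^{T + 2m\hat n}$ (its variables include all $\alpha_k, \beta_k$ for $k \in S$), so a vertex is pinned by $T + 2m\hat n$ linearly independent tight constraints, not $T+1$; worse, the polyhedron itself depends on the random sample $S$, so ``vertices of the sampled dual'' is not a sample-independent family over which one can union bound. The paper avoids both issues by unioning not over price vectors but over equivalence classes of prices, where $\lambda_1 \sim \lambda_2$ iff $P_k(\lambda_1) = P_k(\lambda_2)$ for all $k \in N$ (note the bad-event probability depends on $\lambda$ only through the induced assignments). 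By complementary slackness, the assignment profile is determined by which of the $m^2 n$ points $\bigl( (C_k)_{ij}, (A_{k1})_{ij}, \ldots, (A_{kT})_{ij}, 1 \bigr)$ are separated off by a hyperplane whose normal is parametrized by $\lambda$ (and the session potentials), and a standard computational-geometry count bounds the number of distinct such separations by $C_0 (m^2 n)^{T+1}$, independently of $S$. This is the rigorous version of your ``piecewise-constant map $\lambda \mapsto (P_k(\lambda))_k$ across matching-tie hyperplanes'' remark, and it is the repair your counting step needs.
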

We give a detailed proof in Sec \ref{sec:proof}. 
We make the following remarks about the main result:\\
\textbf{Remarks \\}
(1) Note that the first part states that if the optimal prices are available to the online algorithm, then optimal decisions will be made in each session. Hence the prices $\lambda^*$ are sufficient to make good decisions online. The second and third parts of the theorem quantify the intuition that if we use a small set of samples initially to learn the prices using the sampled linear program, the online algorithm will then perform almost optimally.

\noindent (2) Note that Algorithm \ref{alg:traffic_shaping} advocates using the Hungarian algorithm to compute the best matching in each session. Depending on the number of items to be ranked, this may or may not be feasible at run time --- in the latter case a greedy algorithm may be used. The greedy algorithm, being a $\frac{1}{2}$-approximation in the worst case, can also be analyzed and bounds on the competitive ratio may be derived. In our experiments, we find that using the greedy algorithm does not lead to any loss of performance as a consequence of the RefCTR assumption (see Sec. \ref{sec:expts}).

\noindent (3) Assumption $4$ requires that the for each permutation, the rewards and the influence to each traffic shaping is non-negative and bounded by $1$. The non-negativity assumption is natural in our setting (since we are interested in quantities such as views and clicks). The upper bound of $1$ is somewhat arbitrary, we merely need the rewards and influences to be bounded by some uniform quantity (say $U$). Our results would be more or less unaffected (modulo certain logarithmic factors in the statement of Theorem \ref{thm:main}) if $U > 1$).

\noindent (4) We now comment on the qualitative relationship between $\epsilon$, $B$ and the competitive ratio. In order to make sense of the statement, we assume that $B$ increases linearly with the number of sessions $n$ (e.g. clicks committed increase linearly as the number of sessions increases); i.e. $B=\rho n$. (Note that $B$ growing with $n$ fixed would be problematic from a feasibility point of view - if we over-commit to a traffic-shaping constraint the optimization problem becomes infeasible violating Assumption $2$). Thus as $n$ increases, the right hand side of \eqref{eq:sample_complexity} decreases to zero and we may make $\epsilon$ arbitrarily small, thus yielding (a) a small fraction of the samples used for learning the prices (b) a better competitive ratio, and (c) a high probability of success simultaneously. Indeed \eqref{eq:sample_complexity} may be viewed as a ``sample-complexity'' type result.

\noindent (5) The competitive ratio decreases as the fraction of sessions $\epsilon$ used for learning increases. While on the one hand the prices are learned more accurately as $\epsilon$ increases, on the other hand suboptimal decisions are made by the online algorithm in the learning phase. Indeed, we assume (somewhat pessimistically) that arbitrary decision are made in the learning phase with zero reward accrued, thus an $\epsilon$-fraction of the possible reward is lost in this phase. 

\section{Conclusion } \label{sec:conclusion}
In this paper we investigated the problem of online constrained ranking and its application to the web traffic-shaping problem. We developed a linear-programming based primal-dual algorithm for online ranking and demonstrated it's efficacy on real data. We also proved rigorous guarantees about the algorithm in terms of the achieved competitive ratio.

While the paper focuses on traffic-shaping as the motivating application, we believe this algorithm to have broad applicability to whole-page optimization and beyond. For instance, when optimizing the simultaneous placement of ads and organic content, additional constraints (or modifications to the objective) can be made to factor in the revenue considerations traffic-wide. More generally, this approach can be used for solving a variety of constrained online assignment problems such as matching customers to servers (e.g. matching drivers to passengers) subject to overall service-level constraints. We believe this approach will scale well to a variety of such application domains.

 A number of further avenues for future work are worth mentioning:
 \begin{itemize}
 \item Our model involves optimizing (in the objective) subject to constraints over the predicted dwell-time and the predicted clicks over a period of sessions. These predictions are obtained as the output of a machine-learned model. The number of actual clicks obtained, and the actual dwell-time realized will of course be different. As a practical strategy, we advocate re-solving the linear program periodically (and suitably altering the constraints) to capture these variations. However, there is an unavoidable feedback loop between the rankings and the machine learned models --- the decisions made in the ranking phase affect the learning of these online models. Consequently, there is a natural exploration-exploitation trade-off that is unavoidable in this setup. Ranking unknown items in a highly clickable slot will lead to more exploration, whereas ranking more certain items will lead to exploitation. How one navigates this will likely be of key practical importance.
 \item Our approach assumes that the objective and the constraints can be expressed as linear functions of the ranking; the expected number of clicks and expected dwell time are assumed to be a weighted sum of the components derived from each slot. Other types of constraints may not be linearly expressible (e.g. suppose one wishes to enforce a certain notion of diversity in each ranking and enforce a certain minimum amount of average diversity across sessions). Working with non-linear constraints will likely enable us to enrich the quality of ranking in our traffic. 
 \end{itemize}

\section{Proofs} \label{sec:proof}
We begin by stating two lemmas (their proofs are omitted due to space constraints) which follow as a standard consequence of the analysis of the Hungarian algorithm (we refer the reader to \cite{Hungarian} for details).

\begin{lemma} \label{lemma:hung}
Let $C \in \R^{m \times m}$ be an entry-wise non-negative matrix. Consider the following optimization problems:
\begin{equation}
\begin{split} \label{opt:P3}
\underset{P}{\text{maximize}} & \qquad  \langle C, P \rangle \\
\text{subject to}   
& \qquad P \in \mathcal{B}
\end{split}
\end{equation}

\begin{equation}
\begin{split} \label{opt:D3}
\underset{\alpha, \beta}{\text{maximize}} & \qquad \alpha^{\prime} \mathbf{1} + \beta^{\prime} \mathbf{1} \\
\text{subject to}  & \qquad C + \mathbf{1} \alpha^{\prime}+  \beta \mathbf{1}^{\prime} \leq 0 
\end{split}
\end{equation}
Then the following hold:
\begin{enumerate}
\item The pair \eqref{opt:P3}, \eqref{opt:D3} is a primal-dual pair, and strong duality holds.
\item An optimal solution $P^*$ to \eqref{opt:P3} is attained at a permutation matrix, and this solution can be found using the Hungarian algorithm.
\end{enumerate}
\end{lemma}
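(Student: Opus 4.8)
The plan is to treat \eqref{opt:P3} as a linear program over the Birkhoff polytope $\mathcal{B}$ and to establish both claims by pairing the fundamental theorem of LP duality with the Birkhoff--von Neumann characterization of the vertices of $\mathcal{B}$. First I would exhibit \eqref{opt:D3} as the genuine LP dual of \eqref{opt:P3}. Writing $\mathcal{B}=\{P \geq 0 : P\ones = \ones,\ P^\p \ones = \ones\}$, I would form the Lagrangian by attaching a multiplier $\beta$ to the row-sum constraints $P\ones=\ones$ and $\alpha$ to the column-sum constraints $P^\p\ones=\ones$ (with the sign convention of \eqref{opt:D3}), keeping $P \geq 0$ explicit. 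Collecting the coefficient of each $P_{ij}$ gives the reduced cost $C_{ij}+\alpha_j+\beta_i$; boundedness of the inner supremum over $P \geq 0$ forces $C+\ones\alpha^\p+\beta\ones^\p \leq 0$ entrywise, and on that feasible set the Lagrangian collapses to $\alpha^\p\ones+\beta^\p\ones$, which is exactly \eqref{opt:D3} (the stated ``maximize'' is the customary minimization with the minus signs absorbed into $\alpha,\beta$). Since $\mathcal{B}$ is a nonempty compact polytope inside $[0,1]^{m \times m}$ and $C$ is finite, the primal optimum is finite and attained; LP strong duality then gives equality of values and attainment of the dual optimum, proving part~1.

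For part~2, I would invoke that a linear functional over a compact polytope attains its maximum at a vertex, so some optimal $P^*$ is a vertex of $\mathcal{B}$. By the Birkhoff--von Neumann theorem \cite[Chapter 18]{Hungarian} the vertices of $\mathcal{B}$ are precisely the permutation matrices $\mathcal{P}$, hence $P^* \in \mathcal{P}$. (Equivalently one may cite total unimodularity of the bipartite assignment constraint matrix together with the integral right-hand side $\ones$ to conclude every vertex is integral, the row/column-sum-one constraints then forcing a permutation matrix.) Finally, the restriction of \eqref{opt:P3} to $\mathcal{P}$ is exactly the maximum-weight perfect matching problem on the complete bipartite graph with edge weights $C_{ij}$; the Hungarian algorithm \cite{Hungarian} is the classical combinatorial routine that returns this optimal matching, and its terminal dual potentials realize a feasible point of \eqref{opt:D3} attaining the primal value, simultaneously certifying optimality by complementary slackness.

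There is no deep obstacle here: every ingredient (LP strong duality, Birkhoff--von Neumann, correctness of the Hungarian algorithm) is classical, which is exactly why the lemma is stated with its proof deferred. The only places demanding care are bookkeeping ones: checking that the Lagrangian-derived dual reproduces \eqref{opt:D3} under the paper's sign convention, and observing that entrywise nonnegativity of $C$ is not actually required for either structural claim (it serves elsewhere, e.g.\ in ensuring nonnegative matching values under Assumption~4), so I would in fact prove the two assertions for arbitrary finite $C$ and simply specialize to the nonnegative case used in the sequel.
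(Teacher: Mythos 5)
Your proposal is correct, but it reaches both conclusions by a different route than the paper, and the comparison is instructive. For strong duality, you invoke the fundamental theorem of LP duality (nonempty compact feasible set, finite attained optimum, hence zero duality gap); the paper instead exhibits the explicit strictly interior point $P_{ij}=\frac{1}{m}$ of $\mathcal{B}$ and appeals to Slater's condition \cite[Chap. 5.2.3]{Boyd}. Your justification is the more elementary one for a linear program (Slater is never needed for LPs), while the paper's has the virtue of being a one-line verification. The divergence is larger in part 2: you argue structurally --- a linear functional on a polytope is maximized at a vertex, and by Birkhoff--von Neumann the vertices of $\mathcal{B}$ are exactly the permutation matrices --- and only then invoke the Hungarian algorithm as the routine that finds the optimal matching. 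The paper never uses the vertex characterization here; it argues algorithmically, noting that a perfect matching exists (Hall's theorem on the complete bipartite graph), that the Hungarian algorithm maintains dual feasibility of \eqref{opt:D3} as an invariant, and that its terminal matching satisfies complementary slackness, so the permutation matrix it outputs is certified primal optimal. Your decomposition is more modular and makes transparent your (correct) side observation that nonnegativity of $C$ is not needed for either claim; the paper's primal--dual argument is the one that gets reused downstream --- the invariant ``the Hungarian algorithm preserves the dual inequality'' is precisely what the proofs of Lemma \ref{lemma:wt_hung} and the union-bound counting step in Theorem \ref{thm:main} lean on, which is presumably why the authors prove the lemma that way. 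One further point of agreement: your parenthetical about absorbing minus signs into $\alpha,\beta$ glosses over the fact that, as literally written, the maximum in \eqref{opt:D3} equals the \emph{negative} of the optimum of \eqref{opt:P3}; this sign-convention looseness is present in the paper's own statement and proof as well, so it is not a gap in your argument relative to theirs.
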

 \begin{proof}
 The duality between \eqref{opt:P3} and \eqref{opt:D3} is a standard exercise - we leave it to the reader. To see that strong duality holds, we note that the matrix with entries $P_{ij}=\frac{1}{m}$ is in the relative interior of the feasible set of \eqref{opt:P3}. By Slater's condition \cite[Chap. 5.2.3]{Boyd} strong duality follows.

 The second part follows from the fact that the permutation matrix corresponding to the maximum weight perfect matching is an optimal solution. To see this, first note that a perfect matching exists since the bipartite graph in question is complete (since $C \in \R^{m \times m}$ is square), and thus satisfies Hall's theorem \cite[Chap. 22]{Hungarian} - guaranteeing existence. Furthermore, the Hungarian algorithm \cite{Hungarian} yields a permutation matrix $P^*$ such that dual feasibility is achieved (see e.g. \cite[Chap. 17]{Hungarian}), indeed dual feasibility of \eqref{opt:D3} is one of the key invariants of the Hungarian algorithm. Furthermore, the corresponding permutation matrix $P^*$ satisfies complementary slackness, only edges $(i,j)$ corresponding to a perfect matching are chosen (primal feasibility) corresponding to edges where the dual constraint $\left( C + \mathbf{1} \alpha^{\prime}+ \mathbf{1} \beta^{\prime} \right)_{ij} = 0$ i.e. corresponding to tight constraints, and hence $\left( C + \mathbf{1} \alpha^{\prime}+ \mathbf{1} \beta^{\prime} \right)_{ij} P^*_{ij} = 0$. Hence $P^*$ must be a primal optimal solution.
 \end{proof}
\begin{lemma} \label{lemma:wt_hung}
Let $\lambda = \bar{\lambda} \in \R^{T}$ be fixed such that $\bar{\lambda} \geq 0$. Consider the solution to the optimization problem 
\begin{equation}
\begin{split} \label{opt:D4}
\underset{\alpha_k, \beta_k}{\text{maximize}} & \qquad \sum_{t=1}^T \bar{\lambda}_t b_t + \sum_{k=1}^n \alpha_k^{\prime} \mathbf{1} + \sum_{k=1}^n \beta_k^{\prime} \mathbf{1} \\
\text{subject to}  & \qquad C_k + \sum_{t=1}^{T} \lambda_t A_{kt} + \mathbf{1} \alpha_k^{\prime}+  \beta_k \mathbf{1}^{\prime} \leq 0 \; \; \forall k=1, \ldots, n 
\end{split}
\end{equation}
A set of optimal dual solutions (w.r.t. the constraints of \eqref{opt:D4}) $P_k(\bar{\lambda})$ for $k=1, \ldots, n$ corresponding to \eqref{opt:P1} are achieved at extreme points of the Birkhoff polytope, (i.e. permutation matrices). Moreover each of the $P_k(\bar{\lambda})$ may be computed by computing a maximum-weight perfect matching with respect to the bipartite graph with weights $C_k + \sum_{t=1}^T \bar{\lambda}_t  A_{kt}$ (using the Hungarian algorithm).
\end{lemma}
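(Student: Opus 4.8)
The plan is to exploit the fact that fixing $\lambda = \bar{\lambda}$ eliminates all coupling between sessions, reducing \eqref{opt:D4} to $n$ independent problems each of the exact form already settled by Lemma \ref{lemma:hung}. First I would observe that once $\lambda$ is fixed, the term $\sum_{t=1}^T \bar{\lambda}_t b_t$ in the objective of \eqref{opt:D4} is a constant, and neither the objective nor the constraints couple the variables $(\alpha_k, \beta_k)$ across different indices $k$. Writing $\tilde{C}_k := C_k + \sum_{t=1}^T \bar{\lambda}_t A_{kt}$, the $k$-th block of \eqref{opt:D4} is precisely the dual \eqref{opt:D3} of Lemma \ref{lemma:hung} with $C$ replaced by $\tilde{C}_k$. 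Crucially, $\tilde{C}_k$ is entry-wise non-negative, since the entries of $C_k$ and $A_{kt}$ are non-negative (they model engagement and click counts, cf. Section \ref{sec:formulation}) and $\bar{\lambda} \geq 0$ by hypothesis; this non-negativity is exactly the hypothesis needed to invoke Lemma \ref{lemma:hung}.

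Next I would take the dual of \eqref{opt:D4} with respect to the matrix multipliers $P_k$ associated with its inequality constraints. The central computation is to maximize out the free variables $\alpha_k, \beta_k$: collecting the coefficient of $\alpha_k$ gives $\mathbf{1} - P_k^{\prime}\mathbf{1}$ and that of $\beta_k$ gives $\mathbf{1} - P_k \mathbf{1}$, so finiteness of the dual function forces $P_k \mathbf{1} = \mathbf{1}$ and $P_k^{\prime}\mathbf{1} = \mathbf{1}$, while non-negativity of the multipliers gives $P_k \geq 0$; together these say $P_k \in \mathcal{B}$. The residual dual objective is $\sum_{t} \bar{\lambda}_t b_t - \sum_k \langle \tilde{C}_k, P_k\rangle$ to be minimized, i.e. $\sum_k \langle \tilde{C}_k, P_k\rangle$ to be maximized over $\mathcal{B}$. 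Because this separates across $k$, the problem splits into $n$ independent instances of \eqref{opt:P3}, one per session, each with weight matrix $\tilde{C}_k$.

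Finally I would apply Lemma \ref{lemma:hung} to each of these $n$ instances. Part 2 of that lemma guarantees that the optimum of $\max_{P_k \in \mathcal{B}} \langle \tilde{C}_k, P_k\rangle$ is attained at a permutation matrix, computable by the Hungarian algorithm as a maximum-weight perfect matching on the bipartite graph with weights $\tilde{C}_k = C_k + \sum_{t=1}^T \bar{\lambda}_t A_{kt}$; part 1 (strong duality, which holds since the uniform matrix $\tfrac{1}{m}\mathbf{1}\mathbf{1}^{\prime}$ is strictly feasible for each \eqref{opt:P3}) certifies that these minimizers $P_k(\bar{\lambda})$ are exactly the optimal multipliers for the constraints of \eqref{opt:D4}. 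This establishes both assertions of the lemma.

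The step I expect to be the main obstacle is the dualization in the middle paragraph: verifying cleanly that maximizing over the free dual variables $\alpha_k, \beta_k$ is what enforces the doubly-stochastic (Birkhoff) constraints on the multipliers $P_k$, and that the coupling term $\sum_{t=1}^T \bar{\lambda}_t b_t$ passes through as an additive constant so the per-session problems decouple into the exact form \eqref{opt:P3}. Everything after that is a direct citation of Lemma \ref{lemma:hung}, and the entry-wise non-negativity check for $\tilde{C}_k$ is routine given $\bar{\lambda} \geq 0$ and the modeling assumptions on $C_k$ and $A_{kt}$.
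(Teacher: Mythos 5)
Your proposal is correct and follows essentially the same route as the paper's proof: fix $\bar{\lambda}$, observe that \eqref{opt:D4} decouples across sessions into per-session problems whose duals are exactly the instances \eqref{opt:P3} with weights $\tilde{C}_k = C_k + \sum_{t=1}^T \bar{\lambda}_t A_{kt}$, and invoke Lemma \ref{lemma:hung} to get permutation-matrix optima via the Hungarian algorithm. You are in fact slightly more careful than the paper, which asserts the dual pairing without carrying out the $\alpha_k,\beta_k$ maximization and never explicitly checks the entry-wise non-negativity of $\tilde{C}_k$ required by Lemma \ref{lemma:hung}; both of your added verifications are correct.
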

 \begin{proof}
 Note that for each fixed $\lambda = \bar{\lambda}$ and the separability (with respect to $k$) of the objective function, \eqref{opt:D4} decouples into the following optimization problems:
 \begin{equation}
 \begin{split} \label{opt:D6}
 \underset{\alpha_k, \beta_k}{\text{maximize}} & \qquad   \alpha_k^{\prime} \mathbf{1} +  \beta_k^{\prime} \mathbf{1} \\
 \text{subject to}  & \qquad C_k + \sum_{t=1}^{T} \bar{\lambda}_t A_{kt} + \mathbf{1} \alpha_k^{\prime}+  \beta_k \mathbf{1}^{\prime} \leq 0, 
 \end{split}
 \end{equation}
 and the corresponding optimal solutions $\alpha_k^*(\bar{\lambda})$, $\beta_k^*(\bar{\lambda})$ are optimal with respect to \eqref{opt:D4}.  The strong dual of \eqref{opt:D6}  is precisely:
 \begin{equation}
 \begin{split} \label{opt:P3}
 \underset{P}{\text{maximize}} & \qquad  \langle C_k + \sum_{t=1}^{T} \bar{\lambda}_t A_{kt} , P \rangle \\
 \text{subject to}   
 & \qquad P^{\prime} \mathbf{1} =\mathbf{1} \\
 & \qquad P \geq 0 \; \; k=1, \ldots, n
 \end{split}
 \end{equation}
 By Lemma \ref{lemma:hung},  the solutions $P_k(\bar{\lambda})$ to the above are yielded by the permutations corresponding to the maximum-weight perfect matchings yielded by running the Hungarian algorithm on bipartite graphs with weight matrices $C_k + \sum_{t=1}^{T} \bar{\lambda}_t A_{kt}$ for $k=1, \ldots, T$.
\end{proof}
We will also need the following lemma concerning concentration of random variables. For a proof we refer the reader to \cite[Lemma A.1]{Ye}, \cite[Lemma 3]{Devanur} and the references therein.
\begin{lemma} \label{lemma:conc}
Let $u_1, \ldots, u_r$ be a uniformly random sample without replacement from real numbers  $$\left\{ | \sum_{i=1}^r u_i - r \bar{c} | \geq t \right\} \leq \exp\left( -\frac{t^2}{2r\sigma_R^2 + t \Delta_R} \right),$$
where $\Delta_R= \max_i c_i - \min_i c_i$, $\bar{c}=\frac{1}{R} \sum_{i=1}^R c_i$, and $\sigma_R^2=\frac{1}{R}\sum_{i=1}^R(c_i -\bar{c})^2$.
\end{lemma}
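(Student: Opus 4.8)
The plan is to bound the upper tail $\mathbb{P}\!\big(\sum_{i=1}^r u_i - r\bar c \ge t\big)$ by the exponential-moment (Chernoff) method, where $u_1,\dots,u_r$ is the uniform without-replacement sample from the population $\{c_1,\dots,c_R\}$; the lower tail then follows by running the identical argument on the negated population $-c_1,\dots,-c_R$, and the two-sided event follows by a union bound. Writing $X_i := u_i - \bar c$, each single draw has mean zero in distribution, so the genuine obstruction is not the centering but the \emph{dependence} induced by sampling without replacement: the $X_i$ are not independent and the moment generating function of $\sum_i X_i$ does not factor. The device that circumvents this is Hoeffding's reduction, which replaces the without-replacement sum by a with-replacement (i.i.d.) sum \emph{inside} the MGF at no cost; once we are in the i.i.d. regime, a standard Bernstein/Bennett moment bound plus optimization over the Chernoff parameter delivers exactly the claimed denominator $2r\sigma_R^2 + t\Delta_R$.

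Concretely, first I would fix $\theta>0$ and write $\mathbb{P}(\sum_i X_i \ge t) \le e^{-\theta t}\,\mathbb{E}[e^{\theta \sum_i X_i}]$. Next I invoke Hoeffding's reduction: letting $v_1,\dots,v_r$ be an i.i.d. uniform sample with replacement from $\{c_1,\dots,c_R\}$ and $Y_i := v_i - \bar c$, convexity of $x \mapsto e^{\theta x}$ implies the without-replacement sum is dominated in the convex order by the with-replacement sum, so that $\mathbb{E}[e^{\theta \sum_i X_i}] \le \mathbb{E}[e^{\theta \sum_i Y_i}] = (\mathbb{E}[e^{\theta Y_1}])^r$, the last equality using independence and identical distribution of the $Y_i$. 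This is the crucial step and the one I expand on below.

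It then remains to control the single-draw MGF of $Y_1$, which is mean zero with $\mathbb{E}[Y_1^2]=\sigma_R^2$ and $|Y_1|\le \Delta_R$. Bennett's moment bound gives $\mathbb{E}[e^{\theta Y_1}] \le \exp\!\big(\sigma_R^2(e^{\theta\Delta_R}-1-\theta\Delta_R)/\Delta_R^2\big)$, and the elementary estimate $(e^x-1-x)/x^2 \le 1/(2(1-x/3))$ for $0\le x<3$ yields the sub-gamma form $\log \mathbb{E}[e^{\theta Y_1}] \le \tfrac{1}{2}\sigma_R^2\theta^2/(1-\Delta_R\theta/3)$. Substituting into the Chernoff bound and minimizing $-\theta t + \tfrac{1}{2}r\sigma_R^2\theta^2/(1-\Delta_R\theta/3)$ over $\theta \in [0,3/\Delta_R)$ is the standard route to Bernstein's inequality and produces $\mathbb{P}(\sum_i X_i \ge t) \le \exp(-t^2/(2r\sigma_R^2 + \tfrac{2}{3}\Delta_R t))$. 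Since $\tfrac{2}{3}\Delta_R t \le \Delta_R t$, this already implies the slightly weaker stated bound $\exp(-t^2/(2r\sigma_R^2+\Delta_R t))$, and combining the two tails completes the proof.

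The hard part is justifying Hoeffding's reduction, i.e. the convex-order domination $\sum_i X_i \preceq_{\mathrm{cx}} \sum_i Y_i$. The cleanest route I know is to exhibit the without-replacement sum as a conditional expectation of a symmetric function of the with-replacement sum and then apply Jensen: averaging $\phi(\sum_i v_i)$ over the orderings of the population conditioned on the unordered with-replacement outcome produces $\phi$ evaluated at a without-replacement-type average, and Jensen moves the conditional expectation inside $\phi$ in the favourable direction. I would either cite Hoeffding's theorem for this convex-order fact, treating it on the same footing as Jensen's inequality as \cite{Ye,Devanur} do, or supply the short exchangeability argument just sketched. As an aside, one can bypass the reduction entirely: the partial sums $Z_k := \sum_{i\le k}(u_i-\bar c)$ satisfy $\mathbb{E}[Z_k\mid \mathcal{F}_{k-1}] = \tfrac{R-k}{R-k+1}Z_{k-1}$, so the rescaled process $W_k := \tfrac{R}{R-k}Z_k$ is a martingale with bounded increments and bounded conditional variance, to which a Freedman-type Bernstein inequality applies; however the mean-reverting drift makes the constants messier, so the Hoeffding-reduction route is preferable for obtaining the clean form stated in the lemma.
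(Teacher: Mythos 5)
Your proof is correct in substance, but note that the paper itself does not prove this lemma: it explicitly defers to \cite[Lemma A.1]{Ye} and \cite[Lemma 3]{Devanur}, so there is no internal argument to compare against. What you have reconstructed --- Chernoff bound, Hoeffding's reduction of the without-replacement MGF to the with-replacement (i.i.d.) MGF via convex-order domination, then a Bennett/Bernstein bound on the single-draw MGF and optimization over the Chernoff parameter --- is precisely the standard argument underlying the results the paper cites, so your route coincides with the intended source rather than diverging from it. The individual steps check out: the convex-order domination is Hoeffding's 1963 theorem (your exchangeability/Jensen sketch is the usual proof), the elementary estimate $(e^x-1-x)/x^2 \le 1/(2(1-x/3))$ follows from $k! \ge 2\cdot 3^{k-2}$, the optimization yields the one-sided bound $\exp\left(-t^2/(2r\sigma_R^2 + \tfrac{2}{3}\Delta_R t)\right)$, and your martingale aside is also accurate (the rescaled process $W_k = \tfrac{R}{R-k}Z_k$ is indeed a martingale).

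One wrinkle deserves flagging. Your last step, ``combining the two tails,'' produces $2\exp\left(-t^2/(2r\sigma_R^2+\tfrac{2}{3}\Delta_R t)\right)$, and this is \emph{not} in general dominated by the single exponential $\exp\left(-t^2/(2r\sigma_R^2+\Delta_R t)\right)$ stated in the lemma: for small $t$ the slack gained by enlarging $\tfrac{2}{3}\Delta_R t$ to $\Delta_R t$ is less than $\log 2$, so the factor of $2$ cannot be absorbed. However, the lemma as printed in the paper is garbled (the population $\{c_1,\dots,c_R\}$ and the probability operator are missing from the statement), and it has evidently also dropped the leading factor of $2$ that appears in the original statements in \cite{Ye,Devanur}; the paper's own applications of the lemma in the proof of Theorem \ref{thm:main} use bounds of the form $2\exp(\cdot)$ and $4\exp(\cdot)$, consistent with the two-sided factor. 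So your argument establishes the statement as intended; you should simply state your conclusion with the factor of $2$ in front rather than claiming the two tails merge into a single exponential.
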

 We now provide a detailed proof of Theorem 1.
Note that both our main result for online ranking, and our assumptions are of a similar flavor as existing literature in online algorithms \cite{Devanur,Ye}. Indeed, it is known \cite{Devanur,Ye}, that in these related results, these assumptions are also tight, i.e. it is not possible to get $1-O\left( \epsilon \right)$ competitive ratio if a single one of these assumptions is removed. While we do not show the tightness of the assumptions, we believe it likely that they are needed to achieve the stated competitive ratio. 

\begin{proof}[Proof of Theorem \ref{thm:main}] 
The main idea behind the proof, which mimics \cite{Ye} with suitable adaptations, will be to use the complementary slackness conditions for linear programming. Let $\hat{\lambda}$ be a dual price vector obtained as the solution of the sampled linear program. We denote by $P_k(\hat{\lambda})$ the permutation that is chosen as per steps \ref{step:scoring}, \ref{step:hungarian} of the algorithm. 
\textbf{ \\ \\ Part 1.} 
The online assignments $P_k^{\text{online}}(\hat{\lambda})$ (and similarly $P_k^{\text{online}}(\lambda^*)$) are valid assignments since they are constructed via the Hungarian algorithm. Note that they are both trivially primal-feasible (since assignments are extreme points of the Birkhoff polytope).\\ 

Let $\lambda^*$ be an optimal dual solution of \eqref{opt:D1}. Note that when $\bar{\lambda} = \lambda^*$, the corresponding optimal solutions of \eqref{opt:D4} coincide with those of \eqref{opt:D1}. By strong duality, a set of corresponding primal solutions also coincide. By Lemma \ref{lemma:wt_hung}, a dual variable corresponding to the constraints of \eqref{opt:D4} achieves optimality at permutation matrices $P_k(\lambda^*)$. Hence, $P_k(\lambda^*)=P_k^*$. 
\\
\textbf{Part 2.} Our proof strategy will be as follows. We will take a hypothetical optimal solution $\hat{\lambda}$ to the sampled linear program. It will be assumed to satisfy that the complementary slackness conditions of the primal dual pair \eqref{opt:P3},\eqref{opt:D3} - but otherwise arbitrary. We will show that as a consequence of complementary slackness and the assumptions, with high probability the solution yielded by $P_k(\hat{\lambda})$ are feasible for this hypothetical $\hat{\lambda}$. We will then take a union bound over all possible $\hat{\lambda}$ to obtain that the solution is feasible unconditionally. 

Let us fix a constraint $t$ and note that for the first $\epsilon n$ sessions, arbitrary decisions are made and we thus assume that no contributions to the constraints are made from these. As a result we required that the sampled linear program satisfy:
$$
\sum_{k \in S} \langle A_{kt}, P_k \rangle \geq (1+4 \epsilon) \epsilon b_t,
$$
where the additional $(1+4 \epsilon)$ factor ensures that the solution robustly satisfies the constraints.

The bad event of interest to us is the event that a constraint is violated. We will call the set $S$ bad when the solution $\lambda$ leads us to a situation of such a constraint violation. Concretely, we consider the following set relations corresponding to a set $S$ being bad:

\begin{align*}
&\left\{\sum_{k \in N \setminus S }  \langle A_{kt}, P_k(\lambda) \rangle < b_t, \sum_{k \in S}  \langle A_{kt}, P_k(\lambda) \rangle \geq (1+4 \epsilon)\epsilon b_t \right\} \subseteq A_1 \cup A_2, \\
&A_1 = \left\{\sum_{k \in N \setminus S }  \langle A_{kt}, P_k(\lambda) \rangle < b_t, \sum_{t \in S}  \langle A_{kt}, P_k(\lambda) \rangle \geq (1+4 \epsilon)\epsilon b_t, \right. \\ 
& \qquad \left. \sum_{k \in N}  \langle A_{kt}, P_k(\lambda) \rangle \geq (1+3 \epsilon) b_t  \right\} \\
&A_2 = \left\{ \sum_{k \in S}  \langle A_{kt}, P_k(\lambda) \rangle \geq (1+4 \epsilon)\epsilon b_t, \sum_{k \in N}  \langle A_{kt}, P_k(\lambda) \rangle < (1+3 \epsilon) b_t  \right\}
\end{align*}

Next we note that:
\begin{align*} 
\Q(A_1) & \leq \Q \left(\sum_{k \in N \setminus S }  \langle A_{kt}, P_k(\lambda) \rangle < b_t, \sum_{k \in N}  \langle A_{kt}, P_k(\lambda) \rangle \geq (1+3 \epsilon) b_t \right) \\
& \hspace{-4mm} \leq \Q \left(\sum_{k \in N \setminus S }  \langle A_{kt}, P_k(\lambda) \rangle < b_t, \sum_{k \in N}  \langle A_{kt}, P_k(\lambda) \rangle \geq \frac{1+\epsilon}{1-\epsilon} b_t \right) \\
&\hspace{-4mm} =   \Q \left(\sum_{k \in N \setminus S }  \langle A_{kt}, P_k(\lambda) \rangle < b_t, (1-\epsilon)\sum_{k \in N}  \langle A_{kt}, P_k(\lambda) \rangle \geq (1+\epsilon) b_t \right) \\
&\hspace{-4mm} \leq 2 \exp \left( -\frac{\epsilon^2 b_t}{2-\epsilon }\right)
\end{align*}
where the second inequality follows since $\frac{1+\epsilon}{1-\epsilon} < 1+3 \epsilon$ for all $\epsilon \in (0,\frac{1}{3})$, and the last inequality follows from Lemma \ref{lemma:conc}.

Defining $Y_k:=\langle A_{kt}, P_k(\lambda) \rangle$, we also note that \small 
\begin{align*}
\Q(A_2) & \leq \Q \left( \sum_{t \in S}  \langle A_{kt}, P_k(\lambda) \rangle \geq (1+4 \epsilon)\epsilon b_t, \sum_{t \in N}  \langle A_{kt}, P_k(\lambda) \rangle < (1+3 \epsilon) b_t  \right) \\
& \leq \Q \left( |\sum_{t \in S} Y_t - \epsilon \sum_{t \in N} Y_t| \geq \epsilon^2 b_t\right) \\
& \leq 2 \exp \left( -\frac{\epsilon^3 b_t}{2+ \epsilon}\right)
\end{align*} \normalsize
where the last inequality again follows from an application of Lemma \ref{lemma:conc}.

Hence, the probability of a bad sample is bounded above by $4 \exp \left( -\frac{\epsilon^3 b_t}{2+ \epsilon} \right) \leq \delta$, where $\delta:= \frac{\epsilon}{C_0 (m^2n)^{T+1}} $




Since $\lambda$ is the solution obtained from the sampled linear program, it follows that:
$$
\sum_{k \in S} Y_k \geq (1+\epsilon) \epsilon b_t.
$$

where $\delta := \frac{\epsilon}{2C_0T(m^2n)^{T+2}}$ for some constant $C_0$. In the preceding chain of inequalities, the last one follows from Lemma \ref{lemma:conc}.

Since the solution $\lambda$ obtained from the sampled linear program can be arbitrary, we take a union bound over all the possible ``distinct'' $\lambda$.  We call two solutions $\lambda_1$ and $\lambda_2$ as distinct if $P_k(\lambda_1) \neq P_k(\lambda_2)$ for some session $k \in N$. Consider a dual constraint with complementary slackness: 
$$
\left( C_k + \sum_{t=1}^{T} \lambda_t A_{kt} + \mathbf{1} \alpha_k^{\prime}+ \mathbf{1} \beta_k^{\prime} \right)_{ij} \left(P_k \right)_{ij}=0.
$$
and note that $\left( P_k \right)_{ij} > 0$ is only possible when $$\left( C_k + \sum_{t=1}^{T} \lambda_t A_{kt} + \mathbf{1} \alpha_k^{\prime}+ \mathbf{1} \beta_k^{\prime} \right)_{ij} \geq 0.$$ (Indeed, the set of possible solutions yielded by the Hungarian algorithm will be a subset of such possible $P_k$ - by design the Hungarian algorithm maintains the dual inequality as an invariant \cite{Hungarian}). The set of possible $P_k$ is thus bounded above by the number of unique separations of the points $$\left\{ \left( \left( C_k  \right)_{ij},  \left( A_{k1},  \right)_{ij}, \ldots, \left( A_{kT},  \right)_{ij},1 \right) \right\}_{i,j,k \in [m] \times [m] \times [n]}$$ in $(T+1)$-dimensional space. By results from computational geometry \cite{comp_geom}, the number of such distinct hyperplanes, is bounded above by $C_0 (m^2n)^{T+1}$ for some constant $C_0$. Applying a union bound over the possible different prices, and $t=1, \ldots, T$ we obtain the required result.
\\ \\
\textbf{Part 3.} 
Given $\nu=1-\epsilon$, one can prove approximate feasibility in a similar manner as above by observing that \small
$$
\Q \left( \sum_{k \in N \setminus S }  \langle A_{kt}, P_k(\lambda) \rangle <(1-2\epsilon) b_t, \sum_{k \in N}  \langle A_{kt}, P_k(\lambda) \rangle \geq (1 - \epsilon) b_t \right) \leq \delta,
$$ \normalsize
where $\delta$ is the quantity used in the preceding part.

To bound the competitive ratio, our proof strategy will be to construct an auxiliary linear program for which the online solutions are optimal. We will show that a point $P_k^*$ (the solution to \eqref{opt:P1}) is feasible with respect to this auxiliary linear program, and hence the objective function values between the online solution and that achieved by $P_k^*$ cannot be too different.

Let $\hat{\lambda}$ be the solution of the sampled linear program \eqref{opt:D2} and $P_k(\hat{\lambda})$ the corresponding assignment. Consider the linear program:
\begin{equation}
\begin{split} \label{opt:P_aux}
\underset{P_1, \ldots, P_{n}}{\text{maximize}} & \qquad \sum_{k=1}^{n} \langle C_k, P_k \rangle \\
 \text{subject to}  & \qquad \sum_{k=1}^{n} \langle A_{kt}, P_k \rangle \geq  \hat{b}_t \; \; \forall \; t=1, \ldots, T\\
 & \qquad P_k \in \mathcal{B} \; \; k=1, \ldots, n
\end{split}
 \end{equation}
where the quantity $\hat{b}_t$ is defined as $\hat{b}_t = \sum_{t \in N} \langle A_{kt}, P_k(\hat{\lambda})\rangle$ if $\hat{\lambda}_t>0$ and $b_t=\min \left\{ \sum_{t \in N} \langle A_{kt}, P_k(\hat{\lambda})\rangle, b_t\right\}$ if $\lambda_t =0$.
Since $\hat{\lambda},  P_k(\hat{\lambda})$  satisfy the complementary slackness conditions for \eqref{opt:P_aux} by construction, they are optimal solutions to it. Note that in general the solution of the sampled linear program could be fractional (i.e. $\hat{P}_k$ for some members of the Birkhoff polytope). However, as a consequence of Lemma \ref{lemma:wt_hung}, it is in fact the case that $\hat{P}_k=P_k(\hat{\lambda})$ are optimal.

Suppose $\hat{\lambda}_t >0$, then the $t^{th}$ primal constraint is tight in the sampled linear program, i.e. $\sum_{k \in S}  \langle A_{kt}, P_k(\hat{\lambda})\rangle = (1-\epsilon)\epsilon b_t$. 
By applying an argument along similar lines to part $2$ above and applying concentration, it follows that with probability at least $1-\epsilon$,
$$
\hat{b}_t = \sum_{t \in N} \langle A_{kt}, P_k(\hat{\lambda})\rangle \leq  b_t.
$$

Let $P_k^*$ be the optimal solution to \eqref{opt:P2}. It follows that $P_k^*$ are also feasible solutions to \eqref{opt:P_aux}. As a consequence, $\sum_{k \in N}\langle C_k, P_k(\hat{\lambda}) \rangle \geq \text{OPT}$, where OPT refers to the objective function value of the optimal solution to \eqref{opt:P2}.

By again applying a concentration argument as above, we can argue that with probability exceeding $1-\epsilon$,
$$
\sum_{k \in N \setminus S}\langle C_k, P_k(\hat{\lambda}) \rangle \geq (1-\epsilon)\sum_{k \in N }\langle C_k, P_k(\hat{\lambda}) \geq (1-\epsilon)\text{OPT}.
$$
Since the returns from the first $\hat{n}$ sessions are non-negative (under an arbitrary assignment), the online algorithm returns a reward of at least $(1-\epsilon)\text{OPT}$.
\end{proof}
\bibliographystyle{plain}
\bibliography{sigproc} 

\end{document}